\newtheorem{thm}{Theorem}
\newtheorem{cor}{Corollary}
\newtheorem{lemma}[thm]{Lemma}
\def\C{{\mathbb C}}
\def\D{{\mathbb D}}
\def\be{\begin{equation}}
\def\ee{\end{equation}}
\begin{document}

\title[Two-point distortion theorems for harmonic mappings]{Two-point distortion theorems for harmonic mappings}

\author{V. Bravo\and R. Hern\'andez \and O. Venegas }
\thanks{The
authors were partially supported by Fondecyt Grants \# 1190756.
\endgraf  {\sl Key words:} {Two-point distortion, harmonic mappings, univalence criterion.}
\endgraf {\sl 2020 AMS Subject Classification}. Primary: 30C45, 30C99; \,
%32H02, 32A17; \,
Secondary: 31C05.}

\address{Facultad de Ingenier\'ia y Ciencias\\
Universidad Adolfo Ib\'a\~nez\\
Av. Padre Hurtado 750, Vi\~na del Mar, Chile.}
\email{victor.bravo.g@uai.cl}

\address{Facultad de Ingenier\'ia y Ciencias\\
Universidad Adolfo Ib\'a\~nez\\
Av. Padre Hurtado 750, Vi\~na del Mar, Chile.}
\email{rodrigo.hernandez@uai.cl}

\address{Departamento de Ciencias Matem\'{a}ticas y F\'{\i}sicas. Facultad de Ingenier\'{\i}a\\ Universidad Cat\'olica de
Temuco, Chile.} \email{ovenegas@uct.cl}

%\address{}\email{}
%\address{}\email{}

\begin{abstract} We establish two-point distortion theorems for sense-preserving planar harmonic mappings $f=h+\overline{g}$ which satisfies the univalence criteria in the unit disc such that, Becker's and Nehari`s harmonic version. In addition, we find the sharp two-point distortion theorem when $h$ is a convex function, and normalized mappings such that $h(\D)$ is a $c$-linearly connected domain. To do this, we use the order of this family.
\end{abstract}

\maketitle

\section{Introduction}

To a large extent, the two-point distortion theorems provide us quantitative information on ``how injective" a conformal function is, in the sense of estimating the distance between the images of any two points, respect to the distance (Euclidean or Hyperbolic) of these two points. That is, we seek lower and upper bounds for the distance between $f(a)$ and $f(b)$ for all $a$ and $b$ in the unit disc $\D$ and that involves in some sense  $\rho(a,b)=\mid (a-b)/(1-\overline{a}b)\mid$ and the derivative of the function $f$. The first two-point distortion theorem was introduced by C. Blatter in \cite{B} and proved that if $f$ is a conformal mapping, then $$\mid f(a)-f(b)\mid^2\geq \frac{\sinh^2(2d(a,b))}{8(\cosh(4d(a,b)))}\left(R(a)^2+R(b)^2\right).$$ Here $d(a,b)=\tanh^{-1}(\rho(a,b))$, and $R(z)=(1-\vert z\vert^2)\vert f'(z)\vert $. Moreover, for different subclasses of univalent functions, such as convex, starlike, etc., there are two-point distortion theorems in which the bounds are more accurate. In particular, if $f$ conformally maps the disk onto a convex domain, Kim and Minda in \cite{KM} proved the following result:\\ 

\noindent {\bf Theorem A.} Let $f:\D\to \C$ be a conformal convex mapping, then for all $p>1$ and all $a$, $b$ in $\D$ we have that \begin{equation}
\vert f(a)-f(b)\vert \geq \frac{\sinh(d(a,b))}{2(\cosh(pd(a,b)))^{1/p}}\left(R(a)^p+R(b)^p\right)^{1/p}.
\end{equation}

On the other hand, univalence criteria involving both the pre-Schwarzian and Schwarzian derivatives of locally univalent functions defined in $\D$ are well known, which also provide their own two-point distortion theorems. Let $f$ be a locally univalent mapping, its \textit{Schwarzian} derivative is defined by:

\begin{equation}\label{sf} Sf=\left(\frac{f''}{f'}\right)'-\frac12\left(\frac{f''}{f'}\right)^2=Pf'-\frac12Pf^2,\end{equation} here $Pf=f''/f'$ is named \textit{Pre-Schwarzian} derivative of $f$. Nehari in \cite{N49} showed that $\|Sf\|=\sup\{\vert Sf(z)\vert (1-\vert z\vert ^2)^2:z\in\D\}\leq 2$ implies that $f$ is univalent in $\D$. To see more univalence criteria that involves the Schwarzian derivative, we can see the $p$-criterion due by Nehari in \cite{N54}. In this context, Chuaqui and Pommerenke \cite{ChP} gave a quantitative version of Nehari’s theorem by showing that the condition $\|Sf\|\leq 2$ implies that $f$ has the two-point distortion property \begin{equation}\label{nehari-two-point}\vert f(a)-f(b)\vert \geq d(a,b)\sqrt{R(a)R(b)}.\end{equation}  This result was extended to the $p$-criterion by Ma, Mejía and Minda in \cite{MMM}.\\

This paper aims to obtain two-point distortion theorems associated with different univalence criteria for complex harmonic functions defined in $\D$. These injectivity criteria are associated with both the Pre-Schwarzian and Schwarzian derivatives, but also to harmonic functions whose analytic part is convex, see \cite{ChH07}. The manuscript is organized as follows: In Section 2 deals with the preliminaries on complex harmonic mappings and includes the above univalence criteria, called Theorem B, Theorem C, and Theorem D. Subsections 3.1 and 3.2 are devoted to the two-point distortion theorems associated with the Becker and Nehari criteria respectively, but in their complex harmonic versions. Special attention will be paid in subsection 3.3 since we not only deal with the case when the analytic part is convex but also when it conformally maps the disk into a \textit{linearly connected} domain which means geometrically that it has no inward cusps. Indeed, we prove that these functions form a linearly invariant family which follows the two-point distortion theorem found in this manuscript.

\section{Preliminaries}

A complex-valued harmonic function $f$ in a simply connected domain $\Omega$ are those who satisfies that $\Delta f=\partial^2 f/\partial z\partial \overline z=0$, which is equivalent to $f=u+iv$ where $u$ and $v$ are real harmonic mappings defined in $\Omega$. A straightforward calculation give that this functions has a canonical representation $f=h+\overline g$, where $h$ and $g$ are analytic functions in $\Omega$, that is unique up to an additive constant. When $\Omega=\D$ it is convenient to choose the additive constant so that $g(0)=0$. The representation $f=h+\overline g$ is then unique and is called the canonical representation of $f$. A result of Lewy \cite{Lewy} states that $f$ is locally univalent if and only if its Jacobian $J_f=\vert h'\vert^2-\vert g'\vert ^2$ does not vanish in $\Omega$. Thus, harmonic mappings are either sense-preserving or sense-reversing depending on the conditions $J_f >0$ or $J_f <0$ throughout the domain $\Omega$ where $f$ is locally univalent, respectively. Since $J_f>0$  if and only if $J_{\overline f}<0,$ throughout this work we will consider sense-preserving mappings in $\D.$ In this case the analytic part $h$ is locally univalent in $\D$ since $h'\neq0$, and the second complex dilatation of $f$, $\omega=g'/h'$, is an analytic function in $\D$ with $\vert \omega\vert<1$. The reader can be found in \cite{Dur} a elegant reference for this topic.\\

Duren, Hamada \& Kohr in \cite{DHK} showed, among other results, that $f=h+\overline g$ univalent harmonic mapping and normalized to $h(0)=g(0)=0$, and $h'(0)=1$, and let $\alpha$ the order of this family, which is the supremum of the absolute value of the second Taylor coefficient of $h$, then $$\vert f(a)-f(b)\vert\geq\frac{1}{2\alpha}(1-\exp(-2\alpha d(a,b)))\max\{R(a),R(b)\},$$ where $R(z)=(1-\vert z\vert ^2)(\vert h'(z)\vert-\vert g'(z)\vert).$ Moreover, they also proved that $$\vert f(a)-f(b)\vert\leq\frac{1}{2\alpha}(\exp(2\alpha d(a,b)-1))\min\{Q(a),Q(b)\},$$ where $Q(z)=(1-\vert z\vert ^2)(\vert h'(z)\vert+\vert g'(z)\vert)$. Which represent the harmonic version of Blatter's Theorem. Note that in the analytic case, $\alpha=2$, however, in the harmonic case, the size of $\alpha$ is unknown.\\

Hern\'andez and Mart\'in \cite{RH-MJ}, defined the harmonic pre-Schwarzian and Schwarzian derivatives for sense-preserving harmonic mappings
$f=h+\overline g$. Using those definitions, they generalized different results
regarding analytic functions to the harmonic case
\cite{RH-MJ,rhmje,HM-QC,HM-arch}. The pre-Schwarzian and Schwarzian derivatives of a
sense-preserving harmonic function $f$ are corresponding defined by
\begin{equation*}\label{Pf}
P_f=\frac{h''}{h'}-
\frac{\bar{\omega}\omega'}{1-\vert\omega\vert^2}=Ph-\frac{\bar{\omega}\omega'}{1-\vert\omega\vert^2}=\frac{\partial}{\partial
z}\log(J_f),
\end{equation*}

\begin{equation*}\label{Sf}
S_f=\frac{\partial}{\partial z}P_f-\frac12(P_f)^2=Sh+\frac{\overline \omega}{1-\vert\omega\vert ^2}\left(\omega'\frac{h''}{h'}-\omega''\right)-\frac32\left(\frac{\omega'\overline \omega}{1-\vert \omega\vert^2}\right)^2,
\end{equation*} where $Ph$ and $Sh$ are the standard Pre-Schwarzian and Schwarzian derivative respectively, which is given by equation (\ref{sf}). It is easy to see that if $f$ is analytic ($f=h$ and
$\omega=0$) then $P_f=h''/h'$ and $S_f=Sh$, recovering the classical definition of this
operators. In \cite{RH-MJ} the authors proved that $S_f=0$ if and only if $f$ is a M\"obius harmonic transformation, which is given by $f=h+\alpha \overline h$ with $\alpha\in\D$ and $h$ is a M\"obius analytic function. To motivate the two-point distortion theorems associated to univalence criteria, those harmonic M\"obius mappings satisfies that  $$\vert f(a)-f(b)\vert=\sqrt{\vert h'(a)\vert \vert h'(b)\vert}\vert a-b\vert \vert1+\lambda \alpha\vert,\quad \lambda=\frac{\overline{h(a)-h(b)}}{h(a)-h(b)}\,,\quad  a\neq b.$$ Thus, we can re-write this equation to get
$$\vert f(a)-f(b)\vert=\sqrt{R_h(a)R_h(b)}\sinh(d(a,b))\vert 1+\lambda\alpha\vert,\quad R_h(z)=\vert h'(z)\vert (1-\vert z\vert^2).$$ In addition, for any $a\in\D$, the authors also proved that
$P_{f+\overline {af}}=P_f$ which is the key to prove the extension of Becker's criterion of univalence for harmonic mappings. This is contained in the following theorem:\\

\noindent {\bf Theorem B.} Let $f=h+\overline{g}$  be a sense-preserving
harmonic function in the unit disc $\D$ with dilatation $\omega$. If for all
$z\in \D$ $$
(1-\vert z\vert ^2)\vert zP_f(z)\vert+\frac{\vert z\omega'(z)\vert(1-\vert z\vert ^2)}{1-\vert \omega(z)\vert ^2} \leq 1,$$
then $f$ is univalent. The constant 1 is sharp.

The classical Becker's criterion can be found in \cite{B72} and the extension to harmonic mapping appear in \cite{RH-MJ}.\\

Observe that using the Schwarz-Pick lemma, $\omega:\D\to\D$ satisfies that \begin{equation}\label{omega*}
\|\omega^*\|=\sup_{z\in\D}\dfrac{\vert\omega'(z)\vert(1-\vert z\vert ^2)}{1-\vert \omega(z)\vert ^2}\leq1.\end{equation}
In addition, we consider $\|\omega\|=\sup\{\vert \omega(z)\vert:\,z\in\D\}=\|\omega\|_\infty$.
In \cite{HM-arch} the authors proved, using that for all $a\in\D$, $S_{f+\overline{af}}=S_f$, the following generalization of Nehari's classical univalence criterion:\\

\noindent{\bf Theorem C.} Let $f=h+\overline{g}$ be a sense-preserving harmonic function in the unit disc $\D$ with dilatation $\omega$. Then, there exists $\varepsilon>0$ such that 
$$(1-\vert z\vert^2)^2\vert S_f(z)\vert 
\leq \varepsilon$$ implies that $f$ is univalent. 

Obviously, the classical result is obtained taking $\omega=0$, but in this case, $\varepsilon=2$. Unfortunately, the value of $\varepsilon$ is still unknown for the planar harmonic mappings setting. From the proof of this theorem, we can see that this value $\varepsilon$ is small enough to $\|Sh\|\leq 2$, i.e. $h$ satisfies the Nehari's classical univalence criterion. Moreover, since $S_f$ is invariant under pre-composition of affine mapping, we have that $h+ag$ satisfies the Nehari's criterion for all $a\in \overline{\D}$.\\

The last criterion that we will consider in this manuscript appears in \cite{ChH07} and says the following:\\

\noindent{\bf Theorem D.} Let $f=h+\overline{g}$ be a sense-preserving harmonic function in the unit disc $\D$ with dilatation $\omega$. If $h$ is a convex mapping, then $f$ is univalent. In fact, when $h(\D)$ is $c-$linearly connected domain, if $\|\omega\|_\infty<1/c$, then $f$ is also univalent in the unit disc.\\

Thus, the principal goal of this notes is give to measure of the univalence of functions that satisfies this inyectivity criteria of the form of two-point distortion theorems. Generalizing the classical results for holomorphic mappings.

\section{Results}

Troughs the section, we set $\varphi$ as a locally univalent analytic, and $f=h+\overline g$ be a sense-preserving harmonic mapping with dilatation $\omega$ defined in the unit disc. Recall that the fact that $f$ is sense-preserving, implies that $h'\neq 0$ and $\omega(\D)\subset\D$. Additionally, we call $R_\varphi(z)=(1-\vert z^2\vert)\vert \varphi'(z)\vert$ and   $R(z)=(1-\vert z\vert ^2)(\vert h'(z)\vert-\vert g'(z)\vert)$, and $Q(z)=(1-\vert z\vert ^2)(\vert h'(z)\vert+\vert g'(z)\vert)$ its harmonic version of this quantity.

\subsection{The Becker's criterion case}
Let $\varphi$ be a normalized locally univalent analytic function defined in $\D$. Its order is given by $$\mbox{ord}\langle\varphi\rangle=\sup_{z\in\D}\left\vert \frac12\frac{\varphi''}{\varphi'}(z)(1-\vert z\vert^2)-\overline z\right\vert=\sup_{a\in \D}\left\{\frac12\vert \varphi_a''(0)\vert: \varphi_a(z)=\dfrac{\varphi\left(\dfrac{z+a}{1-\overline{a}z}\right)-\varphi(a)}{(1-\vert a\vert ^2)\varphi'(a)} \right\},$$ which is basically the supremum of second Taylor coefficient of $\varphi_a$, for all $a\in\D$.  A well-known result (see \cite{POM1}) asserts that if $\varphi$ is  univalent mapping defined in the unit disk, normalized to $\varphi(0)=0$ and $\varphi'(0)=1$, and its order is $\alpha,$ then \begin{equation}\label{crecimiento orden} \frac{1}{2\alpha}\left(\left(\frac{1+\vert z\vert}{1-\vert z\vert}\right)^\alpha-1\right)\geq \vert\varphi(z)\vert\geq \frac{1}{2\alpha}\left(1-\left(\frac{1-\vert z\vert}{1+\vert z\vert}\right)^\alpha\right), \quad \quad z\in\D.
\end{equation} Thus, the following Theorem is an application of this when $\varphi$ satisfies the classical univalence criterion due by Becker in \cite{B}. Actually, we haven't been able to find any reference on this, so we include a proof.

\begin{thm}
Let $\varphi:\D\to \C$ be a locally univalent  normalized as before such that $ \vert P\varphi(z)\vert(1-\vert z\vert^2)\leq 1$. Then for all $a,b\in \D$ we have that  \begin{equation}\label{Becker-two-point}\frac{\exp(3d(a,b))-1}{3}\sqrt{R_\varphi(a)R_\varphi(b)}\geq\vert\varphi(a)-\varphi(b)\vert\geq\dfrac{1-\exp(-3d(a,b))}{3}\sqrt{R_\varphi(a)R_\varphi(b)}.\end{equation}
\end{thm}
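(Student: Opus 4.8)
The plan is to deduce the estimate from the growth theorem for linearly invariant families, equation (\ref{crecimiento orden}), once the order of $\varphi$ is under control. First I would observe that Becker's criterion forces a bound on the order of $\varphi$: by the classical Becker theorem $\varphi$ is univalent in $\D$, and for every $z\in\D$
\[
\left|\tfrac12\tfrac{\varphi''}{\varphi'}(z)(1-|z|^2)-\overline z\right|\le \tfrac12\,|P\varphi(z)|(1-|z|^2)+|z|\le \tfrac12+1=\tfrac32,
\]
so $\operatorname{ord}\langle\varphi\rangle\le 3/2$; in other words $\varphi$ lies in the universal linearly invariant family of order at most $3/2$.

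Next I would fix $a,b\in\D$, set $\zeta=(a-b)/(1-\overline b a)$ so that $|\zeta|=\rho(a,b)=\tanh d(a,b)$, and pass to the Koebe transform
\[
\varphi_b(z)=\frac{\varphi\!\left(\dfrac{z+b}{1+\overline b z}\right)-\varphi(b)}{(1-|b|^2)\varphi'(b)} .
\]
Then $\varphi_b$ is univalent, normalized by $\varphi_b(0)=0$ and $\varphi_b'(0)=1$, and, since the order is preserved under precomposition with disk automorphisms (this is exactly the linear invariance built into the definition of $\operatorname{ord}\langle\cdot\rangle$), one has $\operatorname{ord}\langle\varphi_b\rangle\le 3/2$ as well. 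A direct computation gives $\varphi(a)-\varphi(b)=(1-|b|^2)\varphi'(b)\,\varphi_b(\zeta)$, whence $|\varphi(a)-\varphi(b)|=R_\varphi(b)\,|\varphi_b(\zeta)|$. Now I would apply (\ref{crecimiento orden}) to $\varphi_b$ at the point $\zeta$ with exponent $3/2$ — the two bounds in (\ref{crecimiento orden}) are monotone in the order, so using the possibly non-sharp value $3/2$ in place of $\operatorname{ord}\langle\varphi_b\rangle$ is legitimate — and rewrite the result via the elementary identity $\frac{1+\rho(a,b)}{1-\rho(a,b)}=e^{2d(a,b)}$, which turns $\left(\frac{1\pm\rho}{1\mp\rho}\right)^{3/2}$ into $e^{\pm 3d(a,b)}$ and yields
\[
\frac{1-e^{-3d(a,b)}}{3}\,R_\varphi(b)\ \le\ |\varphi(a)-\varphi(b)|\ \le\ \frac{e^{3d(a,b)}-1}{3}\,R_\varphi(b).
\]

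Finally I would note that, since $d(a,b)=d(b,a)$, the same chain of inequalities holds with $R_\varphi(b)$ replaced by $R_\varphi(a)$; multiplying the two lower estimates and, separately, the two upper estimates and taking square roots produces exactly (\ref{Becker-two-point}). The only genuinely substantive point in this scheme is the first one — converting the Becker-type bound $|P\varphi(z)|(1-|z|^2)\le 1$ into the order bound $\operatorname{ord}\langle\varphi\rangle\le 3/2$, which is what makes (\ref{crecimiento orden}) available; the remaining steps (the Koebe transform, invariance of the order, the hyperbolic identity, and the symmetrization through the geometric mean) are routine, and the only minor technical verification is the monotonicity in $\alpha$ of the two sides of (\ref{crecimiento orden}) used to replace the true order by $3/2$.
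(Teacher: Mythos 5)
Your proposal is correct and follows essentially the same route as the paper: bound $\operatorname{ord}\langle\varphi\rangle\le 3/2$ from the Becker hypothesis, apply the growth estimate (\ref{crecimiento orden}) to the Koebe transform, convert $\frac{1+\rho}{1-\rho}$ to $e^{2d(a,b)}$, and symmetrize in $a$ and $b$ via the geometric mean, invoking monotonicity in $\alpha$ to replace the true order by $3/2$. The paper's proof does exactly this (it only makes the monotonicity remark explicit for the upper bound), so there is nothing further to add.
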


\begin{proof}
We note that $$\alpha=\mbox{ord}\langle\varphi \rangle=\sup_{z\in\D}\left\vert\dfrac{1}{2}(1-\vert z\vert^2)\dfrac{\varphi''}{\varphi'}(z)-\overline{z}\right\vert\leq \dfrac{1}{2}+\vert z\vert\leq \dfrac{3}{2},$$ then $\varphi_a$ satisfies inequality (\ref{crecimiento orden}).
Taking $b=\dfrac{z+a}{1-\overline{a}z}$ which is equivalent to $z=\dfrac{b-a}{1-\overline{a}b}$ we obtain $$\left\vert\dfrac{\varphi(b)-\varphi(a)}{(1-\vert a\vert^2)\vert \varphi'(a)\vert}\right\vert\geq \dfrac{1}{2\alpha}\left[1-\left(\dfrac{1-\left\vert\dfrac{b-a}{1-\overline{a}b}\right\vert}{1+\left\vert\dfrac{b-a}{1-\overline{a}b}\right\vert}\right)^\alpha\right]\geq \dfrac{1}{3}\left[1-\left(\dfrac{1-\left\vert\dfrac{b-a}{1-\overline{a}b}\right\vert}{1+\left\vert\dfrac{b-a}{1-\overline{a}b}\right\vert}\right)^{3/2}\right].$$
Since $\rho(a,b)=\left\vert\dfrac{b-a}{1-\overline{a}b}\right\vert$ from the last inequality we obtain $$\vert\varphi(a)-\varphi(b)\vert^2\geq\dfrac{(1-\vert a\vert^2)(1-\vert b\vert^2)\vert\varphi'(a)\vert \vert\varphi'(b)\vert}{9}\left[1-\left(\dfrac{1-\rho(a,b)}{1+\rho(a,b)}\right)^{3/2}\right]^2.$$ But, $$d(a,b)=\frac12\log\left(\dfrac{1+\rho(a,b)}{1-\rho(a,b)}\right),$$ therefore taking square root we obtain the lower bound. 

Since the real function $(\xi^\alpha-1)/\alpha$ is growing in $\alpha$ for any $\xi>1$, using $\alpha\leq3/2$ and \cite[Staz 1.1]{POM1} again, we have that $$\vert\varphi(a)-\varphi(b)\vert\leq \frac{\sqrt{(1-\vert a\vert^2)(1-\vert b\vert^2)\vert\varphi'(a)\vert \vert\varphi'(b)\vert}}{3}\left[\left(\dfrac{1+\rho(a,b)}{1-\rho(a,b)}\right)^{3/2}-1\right],$$ which ends the proof.
\end{proof}

Note that, under the hypothesis of Theorem B, the analytic part of $f=h+\overline{g}$ satisfies that $\vert Ph(z)\vert(1-\vert z\vert^2)\leq 1$ then satisfies the Becker's univalence criterion for analytic mappings in $\D$. Since $P_{f+\lambda \overline{f}}=P_f$ then $\varphi_\lambda=h+\lambda g$ satisfies the Becker's univalence criterion for all $\lambda \in\D$, moreover, taking limits when $\vert\lambda\vert\to 1$, we can asserts that Becker's criterion holds for $\vert\lambda\vert\leq 1$, which is the key for the following theorem:

\begin{thm} Let $f=h+\overline{g}$ be a sense-preserving harmonic mapping defined in $\D$, such that satisfies the hypothesis of Theorem B, then 
$$\frac{1}{3}(\exp(3 d(a,b))-1)\sqrt{Q(a)Q(b)}\geq \vert f(a)-f(b)\vert\geq \frac{1}{3}(1-\exp(-3 d(a,b)))\sqrt{R(a)R(b)}.$$
\end{thm}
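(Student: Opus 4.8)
The plan is to deduce the result from the analytic two‑point distortion estimate (\ref{Becker-two-point}) of the preceding theorem, applied not to $h$ directly but to a suitably chosen affine perturbation $\varphi_\lambda:=h+\lambda g$ of the analytic part. Recall first, from the remarks just before the statement, that the hypothesis of Theorem B together with the identity $P_{f+\lambda\overline f}=P_f$ guarantees that, for every $\lambda$ with $|\lambda|\le 1$, the analytic function $\varphi_\lambda=h+\lambda g$ is locally univalent in $\D$ (indeed $\varphi_\lambda'=h'(1+\lambda\omega)$ never vanishes, since $|\omega|<1$) and satisfies $(1-|z|^2)|P\varphi_\lambda(z)|\le 1$ there — the boundary case $|\lambda|=1$ being obtained from $|\lambda|<1$ by the limiting argument indicated in the text. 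Since both the hypothesis $(1-|z|^2)|P\psi|\le 1$ and the conclusion (\ref{Becker-two-point}) are invariant under replacing $\psi$ by an affine image $c_1\psi+c_2$ with $c_1\neq 0$, the preceding theorem applies to each $\varphi_\lambda$; rewriting $\left(\frac{1\pm\rho(a,b)}{1\mp\rho(a,b)}\right)^{3/2}=e^{\pm 3d(a,b)}$ exactly as in its proof, this gives, for all $a,b\in\D$ and all $|\lambda|\le 1$,
$$\frac{e^{3d(a,b)}-1}{3}\,\sqrt{R_{\varphi_\lambda}(a)R_{\varphi_\lambda}(b)}\ \geq\ |\varphi_\lambda(a)-\varphi_\lambda(b)|\ \geq\ \frac{1-e^{-3d(a,b)}}{3}\,\sqrt{R_{\varphi_\lambda}(a)R_{\varphi_\lambda}(b)}.$$

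The one step with any content is to choose $\lambda$ so that the middle term equals $|f(a)-f(b)|$. Fix $a,b\in\D$ and set $A=h(a)-h(b)$, $B=g(a)-g(b)$, so that $\varphi_\lambda(a)-\varphi_\lambda(b)=A+\lambda B$ while $f(a)-f(b)=A+\overline B$. I would take $\lambda_0=\overline B/B$ when $B\neq 0$ and $\lambda_0=0$ when $B=0$ — so that $|\lambda_0|\le 1$ in all cases — which produces the exact identity $\varphi_{\lambda_0}(a)-\varphi_{\lambda_0}(b)=A+\overline B=f(a)-f(b)$, whence $|f(a)-f(b)|=|\varphi_{\lambda_0}(a)-\varphi_{\lambda_0}(b)|$.

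It then remains to sandwich $R_{\varphi_{\lambda_0}}$ between $R$ and $Q$. Since $|\lambda_0|\le 1$, for every $z\in\D$ one has $|h'(z)|-|g'(z)|\le |h'(z)+\lambda_0 g'(z)|\le |h'(z)|+|g'(z)|$, and multiplying by $1-|z|^2$ gives $R(z)\le R_{\varphi_{\lambda_0}}(z)\le Q(z)$. Substituting $\lambda=\lambda_0$ into the displayed inequality and using the identity above: the lower bound there, together with $R_{\varphi_{\lambda_0}}\ge R\ge 0$ and $1-e^{-3d(a,b)}\ge 0$, yields $|f(a)-f(b)|\ge \frac13(1-e^{-3d(a,b)})\sqrt{R(a)R(b)}$; the upper bound there, together with $R_{\varphi_{\lambda_0}}\le Q$ and $e^{3d(a,b)}-1\ge 0$, yields $|f(a)-f(b)|\le \frac13(e^{3d(a,b)}-1)\sqrt{Q(a)Q(b)}$, which is the asserted estimate.

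The only genuinely delicate ingredient is the admissibility used in the first step: it is the affine invariance $P_{f+\lambda\overline f}=P_f$ that forces \emph{every} member of the family $\{h+\lambda g:|\lambda|\le 1\}$, not merely $h$ itself, to satisfy the Becker‑type univalence hypothesis, and this is precisely what allows the analytic two‑point distortion theorem to be applied to the particular $\varphi_{\lambda_0}$ singled out in the second step. The remaining steps are just the triangle‑inequality sandwich $R\le R_{\varphi_{\lambda_0}}\le Q$ and the monotonicity of $t\mapsto e^{\pm t}$.
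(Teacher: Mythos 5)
Your proposal is correct and follows essentially the same route as the paper's own proof: pick the unimodular $\lambda_0=\overline{(g(a)-g(b))}/(g(a)-g(b))$ (or $\lambda_0=0$) so that $f(a)-f(b)=\varphi_{\lambda_0}(a)-\varphi_{\lambda_0}(b)$, apply the analytic Becker two-point distortion estimate to $\varphi_{\lambda_0}$ via the affine invariance $P_{f+\lambda\overline f}=P_f$, and sandwich $R_{\varphi_{\lambda_0}}$ between $R$ and $Q$. Your explicit remarks on the normalization invariance of the analytic theorem and the limiting argument for $|\lambda|=1$ only make explicit what the paper leaves implicit.
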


\begin{proof} For $a$ and $b$ in $\D$, it follows that $f(a)-f(b)=h(a)-h(b)+\overline{(g(a)-g(b))}$, thus, if $g(a)\neq g(b)$ there exists an unimodular constant $\lambda$ such that $f(a)-f(b)=\varphi_\lambda(a)-\varphi_\lambda(b)$. In the case when $g(a)=g(b)$ we can consider $\lambda=0$. In any case, $\varphi_\lambda$ satisfies the Becker's criterion, thus we can apply inequality (\ref{Becker-two-point}) and the fact that $\vert h'\vert+\vert g'\vert\geq \vert\varphi'_\lambda\vert\geq \vert h'\vert-\vert g'\vert$ to obtain that $$\vert f(a)-f(b)\vert^2\leq \frac13\left[\left(\frac{1+\rho(a,b)}{1-\rho(a,b)}\right)^{3/2}-1\right]Q(a)Q(b),$$ and $$\vert f(a)-f(b)\vert^2\geq \frac13\left[1-\left(\frac{1-\rho(a,b)}{1+\rho(a,b)}\right)^{3/2}\right]R(a)R(b).$$
\end{proof}

\subsection{The Nehari's criterion case}

Let $f=h+\overline{g}$ be a sense-preserving harmonic mapping. In \cite{RH-MJ} the authors shows that there exists $\varepsilon>0$ such that if $\|S_f\|=\sup\{(1-\vert z\vert^2)^2\vert S_f(z)\vert:z\in\D\}\leq \varepsilon$, then $f$ is univalent in $\D$. Moreover, is deduce that there exists a constant $k>0$ such that $$\|Sh\|\leq \varepsilon+kW_\varepsilon+\frac32W_\varepsilon^2,$$ where $W_\varepsilon=\sup\{\|\omega^\ast\|:\omega \in A_\varepsilon\}$. We say that an analytic mapping $\omega$ belongs to $A_\varepsilon$ if and only there exist $f=h+\overline g$ with dilatation $\omega$ and $\|S_f\|\leq \varepsilon$. In the same paper, has been proved that $W_\varepsilon\to0$ when $\varepsilon\to 0$. We can assume that $\varepsilon$ is given by $$\varepsilon=\sup\{\delta:\delta+kW_\delta+\frac32W_\delta^2\leq 2\},$$ thus the Theorem C asserts that $\|S_f\|\leq \varepsilon$ is a univalence criterion for sense-preserving harmonic mappings. From the proof of Theorem C it follows that for $\lambda\in\D$, the locally univalent analytic mapping $\varphi_\lambda=h+\lambda g$,  satisfies that $\|S\varphi_\lambda\|
\leq\varepsilon$ since $f_\lambda=f+\lambda\overline f$ has the same Schwarzian derivative. Thus we can asserts the following lemma:

\begin{lemma} Let $f=h+\overline g$ be a sense-preserving harmonic mapping defined in $\D$ such that $\|S_f\|\leq \varepsilon$, then for any $\lambda\in\D$, the corresponding mapping $\varphi_\lambda$ satisfies that $\|S\varphi_\lambda\|\leq 2$.
\end{lemma}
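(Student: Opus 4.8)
The plan is to unwind the definitions and the estimates already assembled in the paragraph preceding the lemma. We are given a sense-preserving harmonic mapping $f=h+\overline g$ with $\|S_f\|\leq\varepsilon$, where $\varepsilon$ has been \emph{defined} (in that paragraph) as
$$\varepsilon=\sup\Bigl\{\delta:\ \delta+kW_\delta+\tfrac32 W_\delta^2\leq 2\Bigr\},$$
and the key structural fact recalled there is that for every $\lambda\in\D$ the analytic map $\varphi_\lambda=h+\lambda g$ satisfies $S\varphi_\lambda=S_{f_\lambda}=S_f$, because $f_\lambda=f+\lambda\overline f$ is obtained from $f$ by post-composition with the affine map $w\mapsto w+\lambda\overline w$ and $S$ is invariant under such pre/post-affine modifications. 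Hence $\|S\varphi_\lambda\|=\|S_f\|\leq\varepsilon$.

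First I would make explicit the relation between the Schwarzian of the harmonic map and that of its analytic part. By the formula for $S_f$ quoted in Section~2,
$$Sh=S_f-\frac{\overline\omega}{1-|\omega|^2}\Bigl(\omega'\frac{h''}{h'}-\omega''\Bigr)+\frac32\Bigl(\frac{\omega'\overline\omega}{1-|\omega|^2}\Bigr)^2,$$
and the same identity holds with $h$ replaced by $\varphi_\lambda$ and $\omega$ replaced by the dilatation $\omega_\lambda=g'/(h'+\lambda g')$ of $f_\lambda$; but for the purpose of the lemma it suffices to bound $\|S\varphi_\lambda\|$ directly. The cited inequality from \cite{RH-MJ} gives a constant $k>0$ with
$$\|S\varphi_\lambda\|\ \le\ \|S_f\|+k\,W_{\|S_f\|}+\tfrac32\,W_{\|S_f\|}^2,$$
where $W_\delta=\sup\{\|\omega^\ast\|:\omega\in A_\delta\}$ and $A_\delta$ is the class of dilatations arising from harmonic maps with $\|S_{\cdot}\|\le\delta$; note $\omega_\lambda$ (for every $\lambda$) lies in $A_{\|S_f\|}$ precisely because $\|S_{f_\lambda}\|=\|S_f\|$, so the same $W_{\|S_f\|}$ controls all the $\varphi_\lambda$ uniformly in $\lambda$.

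Then I would simply plug in: since $\|S_f\|\le\varepsilon$ and $\delta\mapsto W_\delta$ is monotone (which is immediate from the nested inclusion $A_{\delta_1}\subseteq A_{\delta_2}$ for $\delta_1\le\delta_2$) and $\delta\mapsto \delta+kW_\delta+\tfrac32W_\delta^2$ is therefore nondecreasing, we get
$$\|S\varphi_\lambda\|\ \le\ \varepsilon+kW_\varepsilon+\tfrac32W_\varepsilon^2\ \le\ 2,$$
the last step being exactly the defining property of $\varepsilon$ as the supremum of those $\delta$ for which $\delta+kW_\delta+\tfrac32W_\delta^2\le 2$ (using that the supremum is attained, or a limiting argument letting $\delta\uparrow\varepsilon$ together with continuity of $W$ from the left, which again follows from monotonicity and the fact $W_\delta\to0$ as $\delta\to0$). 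This yields $\|S\varphi_\lambda\|\le2$ for all $\lambda\in\D$, which is the assertion.

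The only delicate point is the interchange of ``$\sup$'' and the inequality at the last step: a priori $\varepsilon$ is a supremum, so one must check either that it is attained or that the bound $\delta+kW_\delta+\tfrac32W_\delta^2\le2$ passes to the limit $\delta\uparrow\varepsilon$. Monotonicity of $W$ makes the left-continuity argument routine, so I expect this to be a one-line justification rather than a real obstacle; everything else is a direct substitution into results already proved in \cite{RH-MJ} and recalled in the excerpt.
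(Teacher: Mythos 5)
Your operative argument (the second and third paragraphs) is exactly the one the paper relies on: affine invariance gives $\|S_{f_\lambda}\|=\|S_f\|\le\varepsilon$, the analytic part of $f_\lambda$ is $\varphi_\lambda$ and its dilatation belongs to $A_\varepsilon$, so the inequality $\|S\varphi_\lambda\|\le\varepsilon+kW_\varepsilon+\tfrac32W_\varepsilon^2\le 2$ from \cite{RH-MJ} together with the definition of $\varepsilon$ finishes the proof; the paper gives no separate proof of the lemma beyond this same preceding discussion. One correction, though: the identity $S\varphi_\lambda=S_{f_\lambda}$ asserted in your opening paragraph is false, and the conclusion you draw from it there, $\|S\varphi_\lambda\|=\|S_f\|\le\varepsilon$, would make the rest of your argument (and the constant $2$ in the statement) pointless. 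What is invariant under the affine post-composition is the \emph{harmonic} Schwarzian, $S_{f_\lambda}=S_f$; the analytic Schwarzian of the analytic part differs from it by the dilatation correction terms you yourself display in the next paragraph, and controlling precisely that discrepancy via $W_\varepsilon$ is the entire content of the lemma. (The paper's sentence immediately before the lemma contains the same slip, writing $\|S\varphi_\lambda\|\le\varepsilon$ where $\le 2$ is meant, so you have faithfully reproduced a typo.) Two minor points: the dilatation of $f_\lambda$ is $(g'+\overline{\lambda} h')/(h'+\lambda g')$, not $g'/(h'+\lambda g')$ --- immaterial here, since membership in $A_\varepsilon$ only requires $\|S_{f_\lambda}\|\le\varepsilon$; and your care about whether the supremum defining $\varepsilon$ is attained addresses a point the paper dismisses with ``we can assume,'' so your limiting argument is a reasonable (indeed more honest) way to handle it, though left-continuity of $W_\delta$ does not follow from monotonicity alone and would need the rescaling argument $f(rz)$, $r\uparrow 1$, to be made rigorous.
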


In \cite[Thm. 1.1 part a)]{MMM} the authors proved that if $\|S\varphi\|\leq2t$ with $t\in[0,1]$ then \begin{equation}\label{nehari-t}\vert\varphi(a)-\varphi(b)\vert\leq \sqrt{\dfrac{R_\varphi(a)R_\varphi(b)}{1+t}}\sinh\left({\sqrt{1+t}\,d(a,b)}\right),\end{equation} which is, together \cite[Thm. 2]{ChP} the key inequalities for the following theorem.

\begin{thm} Let $f=h+\overline g$ be a sense-preserving harmonic mapping defined in $\D$. If $\|S_f\|\leq \varepsilon$ where $\varepsilon$ as in Theorem B, then \begin{equation*}\sqrt{\frac{Q(a)Q(b)}{2}}\sinh\left(\sqrt{2}\,d(a,b)\right)\geq\vert f(a)-f(b)\vert\geq d(a,b)\sqrt{R(a)R(b)}.
\end{equation*} 
\end{thm}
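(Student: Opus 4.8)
The plan is to mimic the proof of the preceding theorem (the Becker case), replacing the growth estimate by the two Nehari-type two-point inequalities for analytic maps. First I would extract from the Lemma the following: the hypothesis $\|S_f\|\le\varepsilon$ gives $\|S\varphi_\lambda\|\le 2$ for every $\lambda\in\D$, where $\varphi_\lambda=h+\lambda g$; and since $S\varphi_\lambda$ depends continuously on $\lambda$, letting $|\lambda|\to 1$ shows that $\|S\varphi_\lambda\|\le 2$ in fact for all $|\lambda|\le 1$. Each such $\varphi_\lambda$ is locally univalent in $\D$, because $\varphi_\lambda'=h'(1+\lambda\omega)$ and $|\lambda\omega(z)|<1$ throughout $\D$ whenever $|\lambda|\le1$; hence the analytic results (\ref{nehari-t}) and (\ref{nehari-two-point}) are applicable to $\varphi_\lambda$.

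Next I fix $a,b\in\D$ (the case $a=b$ being trivial since then both sides vanish). Writing $f(a)-f(b)=\bigl(h(a)-h(b)\bigr)+\overline{\bigl(g(a)-g(b)\bigr)}$ and choosing the unimodular constant $\lambda=\overline{g(a)-g(b)}/(g(a)-g(b))$ when $g(a)\ne g(b)$, and $\lambda=0$ otherwise, one gets $f(a)-f(b)=\varphi_\lambda(a)-\varphi_\lambda(b)$ with $|\lambda|\le 1$, so $\|S\varphi_\lambda\|\le 2$ by the first step.

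Then I apply the two analytic estimates to $\varphi_\lambda$. For the upper bound, inequality (\ref{nehari-t}) with $t=1$ gives
$$|f(a)-f(b)|=|\varphi_\lambda(a)-\varphi_\lambda(b)|\le\sqrt{\frac{R_{\varphi_\lambda}(a)\,R_{\varphi_\lambda}(b)}{2}}\,\sinh\bigl(\sqrt2\,d(a,b)\bigr),$$
and since $R_{\varphi_\lambda}(z)=(1-|z|^2)|h'(z)+\lambda g'(z)|\le(1-|z|^2)\bigl(|h'(z)|+|g'(z)|\bigr)=Q(z)$, the right-hand side is at most $\sqrt{Q(a)Q(b)/2}\,\sinh(\sqrt2\,d(a,b))$. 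For the lower bound, inequality (\ref{nehari-two-point}) (i.e.\ \cite[Thm.~2]{ChP}) gives
$$|f(a)-f(b)|=|\varphi_\lambda(a)-\varphi_\lambda(b)|\ge d(a,b)\sqrt{R_{\varphi_\lambda}(a)\,R_{\varphi_\lambda}(b)},$$
and since $|\lambda|\le1$ forces $R_{\varphi_\lambda}(z)=(1-|z|^2)|h'(z)+\lambda g'(z)|\ge(1-|z|^2)\bigl(|h'(z)|-|g'(z)|\bigr)=R(z)$, the right-hand side is at least $d(a,b)\sqrt{R(a)R(b)}$. Chaining the two displays yields the asserted inequality.

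The genuinely new content is minimal: the norm comparisons $R\le R_{\varphi_\lambda}\le Q$ are immediate from $|\lambda|\le1$, and the only step requiring care is the limiting argument that promotes $\|S\varphi_\lambda\|\le 2$ from $|\lambda|<1$ to $|\lambda|=1$ — needed so that the unimodular $\lambda$ arising in the second step is admissible — but this is exactly the device already used in the Becker case, so I do not anticipate any real obstacle.
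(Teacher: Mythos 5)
Your proposal is correct and follows essentially the same route as the paper: decompose $f(a)-f(b)=\varphi_\lambda(a)-\varphi_\lambda(b)$ for a suitable unimodular $\lambda$, apply the Lemma to get $\|S\varphi_\lambda\|\le 2$, then use inequality (\ref{nehari-t}) with $t=1$ and \cite[Thm.~2]{ChP} together with $|h'|-|g'|\le|\varphi_\lambda'|\le|h'|+|g'|$. Your explicit limiting argument promoting $\|S\varphi_\lambda\|\le 2$ from $|\lambda|<1$ to $|\lambda|=1$ is a point the paper leaves implicit, but it is the same device the paper invokes in the Becker case.
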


\begin{proof} For any $a$ and $b$ in the unit disk, it follows that $f(a)-f(b)=h(a)-h(b)+\overline{(g(a)-g(b))}$, thus, if $g(a)\neq g(b)$ there exists an unimodular constant $\lambda$ such that $f(a)-f(b)=\varphi_\lambda(a)-\varphi_\lambda(b)$. In the case when $g(a)=g(b)$ we can consider $\lambda=0$. But, using Theorem 2 in \cite{ChP} (equation \ref{nehari-two-point}) and inequality (\ref{nehari-t}) we have that $$\frac{R_{\varphi_\lambda}(a)R_{\varphi_\lambda}(b)}{2}\sinh^2\left(\sqrt{2}\,d(a,b)\right)\geq\vert \varphi_\lambda(a)-\varphi_\lambda(b)\vert^2\geq d(a,b)^2R_{\varphi_\lambda}(a)R_{\varphi_\lambda}(b),$$ but $\vert h'\vert+\vert g'\vert\geq\vert \varphi_\lambda'\vert\geq \vert h'\vert-\vert g'\vert$ which completes the proof.
\end{proof}

Note that if $f$ is analytic, which is the case when $\omega=0$, then $W_\varepsilon=0$ and $\varepsilon=2$, thus, the last theorem is a generalization of \cite[Thm. 2]{ChP}.

\subsection{When the analytic part is a convex function}

We known that  $f=h+\overline{g}$ is univalent in the unit disk when $h$ is a convex mapping (by \cite[Thm. 1]{ChH07}). Moreover, the result can be extended to the case when $h$ maps conformally $\D$ onto a linearly connected domain, which is the matter in sub-section 3.1.1.

\begin{thm}
Let $f=h+\overline g$ be a sense-preserving harmonic mapping defined in $\D$ such that $h$ is a convex function. Then for all $a,b\in \D$ $$\vert f(a)-f(b)\vert\leq (1+\|\omega\|_\infty)\left\vert\dfrac{a-b}{1-\overline{a}b}\right\vert\left(\dfrac{R_h(a)+R_h(b)}{2}\right),$$ and $$\vert f(a)-f(b)\vert\geq (1-\|\omega\|_\infty)\left\vert\dfrac{a-b}{1-\overline{a}b}\right\vert\left(\dfrac{R_h(a)+R_h(b)}{2}\right).$$ 
\end{thm}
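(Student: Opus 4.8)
The plan is to transfer the distortion from $f$ to its analytic part $h$ by using that $h(\D)$ is convex, and then to apply to $h$ a two-point distortion theorem for conformal convex mappings. Fix $a,b\in\D$. Since $h(\D)$ is convex, the Euclidean segment $[\,h(b),h(a)\,]$ lies in $h(\D)$, and hence pulls back under the univalent $h$ to a smooth arc $\gamma\colon[0,1]\to\D$ with $\gamma(0)=b$ and $\gamma(1)=a$; parametrising it so that $h(\gamma(t))=(1-t)h(b)+t\,h(a)$ gives the pointwise relation $h'(\gamma(t))\gamma'(t)\equiv h(a)-h(b)=:L$ along $\gamma$. Integrating $df=h'\,dz+\overline{g'}\,d\bar z$ over $\gamma$ and using $g'=\omega h'$ one obtains
\[
f(a)-f(b)=\int_0^1\Bigl(L+\overline{\omega(\gamma(t))}\,\bar L\Bigr)\,dt=L+\bar L\,\overline{\int_0^1\omega(\gamma(t))\,dt}.
\]
Since $\bigl|\int_0^1\omega(\gamma(t))\,dt\bigr|\le\|\omega\|_\infty$ and $|\bar L|=|L|=|h(a)-h(b)|$, this produces the pinch
\[
(1-\|\omega\|_\infty)\,|h(a)-h(b)|\ \le\ |f(a)-f(b)|\ \le\ (1+\|\omega\|_\infty)\,|h(a)-h(b)|.
\]

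It then suffices to control $|h(a)-h(b)|$ by $\rho(a,b)\,\tfrac12\bigl(R_h(a)+R_h(b)\bigr)$, where $\rho(a,b)=\bigl|(a-b)/(1-\overline a b)\bigr|=\tanh d(a,b)$. For the lower estimate I would apply Theorem A of Kim and Minda to the convex map $h$ in the limiting exponent $p=1$ (letting $p\to1^{+}$ in \cite{KM}): the right-hand side there tends to $\dfrac{\sinh d(a,b)}{2\cosh d(a,b)}\bigl(R_h(a)+R_h(b)\bigr)=\tfrac12\tanh(d(a,b))\bigl(R_h(a)+R_h(b)\bigr)=\rho(a,b)\,\tfrac12\bigl(R_h(a)+R_h(b)\bigr)$, exactly as required. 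For the upper estimate one uses the companion upper two-point distortion for convex mappings. Feeding both bounds into the pinch of the first paragraph yields the two inequalities of the statement.

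The step I expect to be the main obstacle is the conformal one: extracting the clean coefficient $\tfrac12$ in front of $R_h(a)+R_h(b)$ is a genuinely two-point feature of convex functions, since renormalising by a disc automorphism so that $h_a$ (defined as for $\varphi_a$ above, with $h$ in place of $\varphi$) is again convex and applying the one-point growth theorem (\ref{crecimiento orden}) with order $\alpha=1$ only yields the weaker factors $1/(1\pm\rho(a,b))$; one must therefore use the data at $a$ and at $b$ simultaneously, as in the $p=1$ case of Theorem A, whose extremal is the half-plane mapping $z\mapsto z/(1-z)$. Once that conformal two-point distortion is in hand, the harmonic part of the argument --- the chord-preimage identity and the resulting factor $1\pm\|\omega\|_\infty$ --- is routine.
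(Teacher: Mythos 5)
Your proposal is correct and follows essentially the same route as the paper: pull back the Euclidean segment $[h(b),h(a)]$ (available since $h(\D)$ is convex), use $g'=\omega h'$ along it to get the pinch $(1\pm\|\omega\|_\infty)\vert h(a)-h(b)\vert$, and then invoke the Kim--Minda convex two-point distortion. If anything, you are more explicit than the paper, which simply cites ``Theorem A'' for both bounds, whereas you correctly note that the lower estimate is the $p\to 1^{+}$ limit of Theorem A and that the upper estimate needs the companion upper distortion theorem for convex mappings from \cite{KM}.
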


\begin{proof}
We note that 
\begin{eqnarray}\vert h(a)-h(b)\vert+\vert g(a)-g(b)\vert \geq\vert f(a)-f(b)\vert \geq \vert h(a)-h(b)\vert-\vert g(a)-g(b)\vert.\label{eq1}\end{eqnarray} 
Considering the curve $\gamma=h^{-1}(R),$ where $R$ is the segment joining $h(a)$ and $h(b)$, then for $\zeta=h^{-1}(w)$ we have that $d\zeta=\dfrac{1}{h'(h^{-1}(w))}dw$ and  
$$
\vert g(a)-g(b)\vert=\left\vert\int_{\gamma}g'(\zeta)\, d\zeta\right\vert
=  \left\vert\int_R\dfrac{g'(\zeta)}{h'(\zeta)}dw\right\vert
\leq  \|\omega\|_\infty\int_R\vert dw\vert
= \|\omega\|_\infty \vert h(a)-h(b)\vert.
$$

Hence, substituting this last inequality in (\ref{eq1}) we obtain \begin{equation*}\label{desigualdad-h-convexa}(1+\|\omega\|_\infty)\vert h(a)-h(b)\vert\geq\vert f(a)-f(b)\vert\geq (1-\|\omega\|_\infty)\vert h(a)-h(b)\vert.\end{equation*}
Therefore, using Theorem A follows the result.
\end{proof}

%Finalmente por el teorema A se tiene que
%$$|f(a)-f(b)|\geq (1-\|\omega\|_\infty)\left|\dfrac{a-b}{1-\overline{a}b}\right|\left(\dfrac{R_h(a)+R_h(b)}{2}\right).$$

\subsubsection{When the analytic part $h$ is $c$-linearly connected}

We will pay special attention to the case where $h(\D)$ is a linearly connected domain, which means that for any $a$ and $b$ in $\D$ there exists a curve $\gamma$ that joined $h(a)$ with $h(b)$, such that its length satisfies that \begin{equation}\ell_\gamma\leq c\vert h(a)-h(b)\vert.
\label{gamma}\end{equation} In this case, we say that $f$ is a $c$-linearly connected. Observed that $c\geq 1$ and $c=1$ if and only if $h(\D)$ is a convex domain. Was proved in \cite{ChH07} that if $h(\D)$ is a linearly connected domain with constant $c$ and $\vert\omega\vert<1/c$ then $f$ is univalent in the unit disc. 

\begin{thm} Let $\varphi:\D\to\C$ be a conformal and $c$-linearly connected mapping defined in $\D$, then $$\varphi_a(z)=\dfrac{\varphi\left(\dfrac{z+a}{1+\overline{a}z}\right)-\varphi(a)}{(1-\vert a\vert^2)\varphi'(a)}$$ is a $c$-linearly connected mapping.  
\end{thm}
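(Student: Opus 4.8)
\emph{Proof proposal.} The plan is to reduce the statement to two elementary facts: that precomposition by a disc automorphism does not change the image domain, and that $c$-linear connectivity of a domain is preserved by affine maps. First I would factor
\[
\varphi_a = T_a\circ\varphi\circ\psi_a,\qquad \psi_a(z)=\frac{z+a}{1+\overline{a}z},\qquad T_a(w)=\frac{w-\varphi(a)}{(1-\vert a\vert^2)\varphi'(a)},
\]
where $\psi_a$ is a M\"obius automorphism of $\D$ (so $\psi_a(\D)=\D$) and $T_a$ is an affine map whose multiplier $\mu_a=\big((1-\vert a\vert^2)\varphi'(a)\big)^{-1}$ is non-zero, since $\varphi$ conformal forces $\varphi'(a)\neq 0$ and $\vert a\vert<1$. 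Consequently $\varphi_a(\D)=T_a(\varphi(\D))$, and $\varphi_a$ is conformal as a composition of conformal maps, so the whole matter comes down to showing that the image domain $T_a(\varphi(\D))$ is $c$-linearly connected.

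Next I would verify this affine invariance directly. By hypothesis, for any two points $p,q\in\varphi(\D)$ there is a curve $\gamma\subset\varphi(\D)$ joining them with $\ell_\gamma\le c\,\vert p-q\vert$. Given two points of $\varphi_a(\D)$, write them as $T_a(p)$ and $T_a(q)$ with $p,q\in\varphi(\D)$, pick such a $\gamma$, and set $\widetilde{\gamma}=T_a\circ\gamma$; this is a curve lying in $T_a(\varphi(\D))=\varphi_a(\D)$ that joins $T_a(p)$ and $T_a(q)$. Since $T_a$ scales every Euclidean length and every Euclidean distance by the same factor $\vert\mu_a\vert$, one gets
\[
\ell_{\widetilde{\gamma}}=\vert\mu_a\vert\,\ell_\gamma\le c\,\vert\mu_a\vert\,\vert p-q\vert=c\,\vert T_a(p)-T_a(q)\vert,
\]
so $\varphi_a(\D)$ is $c$-linearly connected with the \emph{same} constant $c$. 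Translating this back through the parametrization $\varphi_a$ (a bijection onto its image) yields precisely the inequality (\ref{gamma}) for $\varphi_a$.

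The argument is essentially bookkeeping, so I do not expect a genuine obstacle. The only point worth stating explicitly is that, although the definition of a $c$-linearly connected mapping is phrased through the parametrization, it is really a property of the image domain — the connecting curves are required to lie in the image — so reparametrizing the source disc by $\psi_a$ (which carries $0$ to $a$ and leaves the image untouched) costs nothing; the one line to double-check is that $\widetilde{\gamma}=T_a\circ\gamma$ indeed stays in $\varphi_a(\D)=T_a(\varphi(\D))$, which is immediate from $\gamma\subset\varphi(\D)$. If desired, one can also record that $\varphi_a(0)=0$ and $\varphi_a'(0)=1$, matching the normalization used elsewhere, though this plays no role in the connectivity conclusion.
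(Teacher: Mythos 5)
Your proposal is correct. The paper actually states this theorem without giving any proof, and your argument --- factoring $\varphi_a=T_a\circ\varphi\circ\psi_a$ with $\psi_a$ a disc automorphism (so the image domain is unchanged up to the affine map $T_a$) and observing that an affine map rescales all lengths and distances by the same factor $\vert\mu_a\vert$, hence preserves the constant $c$ --- is precisely the standard justification the authors evidently had in mind; there is nothing to add beyond your own correct remark that linear connectivity is a property of the image domain, not of the parametrization.
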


Moreover, it is known that the set $\mathcal F$, of normalized functions by $\varphi(0)=0$ and $\varphi'(0)=1$ such that for any $a\in\D$, $\varphi_a$ belongs to $\mathcal F$ is called \textit{linearly invariant family}, and has been largely studied by Ch. Pommerenke, for instance \cite{POM1}. One of the most famous families is the conformal, called $S$. It is well-known that the order of $S$ is less or equal than 2. By the last Theorem, given $c\geq1$ the set $K_c=\{\varphi\in S:\varphi(\D)\,\mbox{is $c$-linearly connected}\}$ is a linearly invariant family. Recall that $K_1$ is the classical family of convex mappings $K$ (see \cite{Dur83}). Let $\beta$ the order of $K_c$, which depends of $c$ and tends to 2 when $c$ goes to $\infty$ and since $K_c\subset S$, then $\beta< 2$.

\begin{thm}  Let $f=h+\overline g$ be a sense-preserving harmonic mapping defined in $\D$ such that $h$ is a $c$-linearly connected mapping and its dilatation satisfies that $\|\omega\|_\infty<1/c$, then for any $a$ and $b$ in $\D$ $$\vert f(a)-f(b)\vert\geq (1-c\|\omega\|_\infty)\frac{1}{2\beta}(1-\exp(-2\beta d(a,b))\sqrt{R_h(a)R_h(b)},$$ and $$\vert f(a)-f(b)\vert\leq (1+c\|\omega\|_\infty)\frac{1}{2\beta}(\exp(2\beta d(a,b)-1)\sqrt{R_h(a)R_h(b)},$$ where $\beta$ is the order of $h$. 
\end{thm}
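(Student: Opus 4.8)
The plan is to reduce the statement, exactly as in the convex case treated above, to a two-point distortion estimate for the analytic part $h$ alone, and then to deduce that estimate from the growth theorem for linearly invariant families (\ref{crecimiento orden}) together with the linear invariance of the family $K_c$, which is Theorem 6. The role played by the line segment in the convex case will be played here by a curve realizing the $c$-linear connectivity of $h(\D)$.

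First I would start from $f(a)-f(b)=\big(h(a)-h(b)\big)+\overline{\big(g(a)-g(b)\big)}$, which gives the bracketing
$$\vert h(a)-h(b)\vert+\vert g(a)-g(b)\vert\geq \vert f(a)-f(b)\vert\geq \vert h(a)-h(b)\vert-\vert g(a)-g(b)\vert.$$
The only place where $c$-linear connectivity enters is in estimating $\vert g(a)-g(b)\vert$: choosing a curve $\Gamma\subset h(\D)$ joining $h(a)$ and $h(b)$ with $\ell_\Gamma\leq c\vert h(a)-h(b)\vert$ as in (\ref{gamma}) and pulling it back through the conformal map $h$ to $\gamma=h^{-1}(\Gamma)$, the change of variables $\zeta=h^{-1}(w)$ yields
$$\vert g(a)-g(b)\vert=\left\vert\int_\gamma g'(\zeta)\,d\zeta\right\vert=\left\vert\int_\Gamma\frac{g'(\zeta)}{h'(\zeta)}\,dw\right\vert\leq \|\omega\|_\infty\,\ell_\Gamma\leq c\,\|\omega\|_\infty\,\vert h(a)-h(b)\vert.$$
Since $\|\omega\|_\infty<1/c$, substituting this back into the bracketing gives
$$(1+c\|\omega\|_\infty)\vert h(a)-h(b)\vert\geq \vert f(a)-f(b)\vert\geq (1-c\|\omega\|_\infty)\vert h(a)-h(b)\vert,$$
with the right-hand side genuinely nonnegative.

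It then remains to show that $h$ itself satisfies
$$\tfrac{1}{2\beta}\big(\exp(2\beta d(a,b))-1\big)\sqrt{R_h(a)R_h(b)}\;\geq\;\vert h(a)-h(b)\vert\;\geq\;\tfrac{1}{2\beta}\big(1-\exp(-2\beta d(a,b))\big)\sqrt{R_h(a)R_h(b)}.$$
For this I would follow the scheme of the Becker case (Theorem 1): normalizing $h$ by an affine map of the target so that $h\in K_c$ (which changes neither side of the last display), Theorem 6 shows that each Koebe transform $h_a$ is again conformal, normalized and $c$-linearly connected, hence lies in $K_c$ and so has order at most $\beta$; invoking (\ref{crecimiento orden}) for $h_a$ and using that the lower (resp. upper) bound there is decreasing (resp. increasing) in the order — the decreasing monotonicity of $(1-t^{\alpha})/\alpha$ for $t\in(0,1)$ being the companion of the fact, already used for Theorem 1, that $(\xi^{\alpha}-1)/\alpha$ increases in $\alpha$ for $\xi>1$ — one may replace the order by $\beta$. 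Taking $z=(b-a)/(1-\overline{a}b)$, so that $\vert z\vert=\rho(a,b)$ and $(z+a)/(1-\overline{a}z)=b$, this reads
$$\frac{1}{2\beta}\left(\left(\frac{1+\rho(a,b)}{1-\rho(a,b)}\right)^{\beta}-1\right)\geq \frac{\vert h(a)-h(b)\vert}{R_h(a)}\geq \frac{1}{2\beta}\left(1-\left(\frac{1-\rho(a,b)}{1+\rho(a,b)}\right)^{\beta}\right).$$
Running the same argument with $a$ and $b$ interchanged yields the analogous inequality with $R_h(b)$ in the denominator; multiplying the two and extracting square roots produces the symmetric estimate with $\sqrt{R_h(a)R_h(b)}$, and the identity $d(a,b)=\tfrac12\log\frac{1+\rho(a,b)}{1-\rho(a,b)}$ converts $\big(\frac{1\pm\rho}{1\mp\rho}\big)^{\beta}$ into $\exp(\pm 2\beta d(a,b))$. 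Combining this with the bracketing of the previous paragraph gives both asserted inequalities.

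I expect the main obstacle to be the middle step: one must be sure that the curve furnished by $c$-linear connectivity of $h(\D)$ can legitimately be transported back through $h$ — this is exactly where conformality (not merely local univalence) of $h$ is used — and that the length bound survives the change of variables, which is precisely what the dilatation bound $\vert g'/h'\vert\leq\|\omega\|_\infty$ ensures. A secondary, but already settled, point is that $\beta$, the order of the whole family $K_c$, dominates $\mathrm{ord}\langle h_a\rangle$ for every $a\in\D$, so that (\ref{crecimiento orden}) can be applied uniformly to all Koebe transforms; the rest is the routine bookkeeping already carried out in the proofs of Theorem 1 and of the convex case above.
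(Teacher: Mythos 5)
Your proposal is correct and follows essentially the same route as the paper's own proof: the pullback of the linear-connectivity curve through $h$ to get $\vert g(a)-g(b)\vert\leq c\|\omega\|_\infty\vert h(a)-h(b)\vert$, the triangle-inequality bracketing, and then Pommerenke's growth theorem (\ref{crecimiento orden}) applied to the Koebe transforms of $h$. The paper's version is merely terser, citing \cite[Satz 1.1]{POM1} without spelling out the monotonicity in the order or the symmetrization in $a$ and $b$ that you supply.
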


\begin{proof} For any $a$ and $b$ in $\D$ exists a curve $\gamma\subset h(\D)$ such that its large $\ell_\gamma$, satisfies (\ref{gamma}). Let $\Gamma=h^{-1}(\gamma)\subset\D$, thus, considering $z=h^{-1}(\zeta)$ with $\zeta\in\gamma$, we have that $$\vert g(a)-g(b)\vert=\left\vert\int_\Gamma g'(z)dz\right\vert\leq\int_{\gamma}\frac{\vert g'(z)\vert}{\vert h'(z)\vert}\vert d\zeta\vert\leq\|\omega\|_\infty\ell_\gamma\leq c\|\omega\|_\infty \vert h(a)-h(b)\vert.$$ Let $\beta$ the order of $h$, which is less than 2. Since $\vert f(a)-f(b)\vert=\vert h(a)-h(b)+\overline{(g(a)-g(b))}\vert$, \cite[Staz 1.1]{POM1}, and triangle inequality, the proof is complete.

\end{proof}

Moreover, for any $\lambda$ with modulus 1, the corresponding analytic function $\varphi_\lambda$ is also a linearly connected mapping with constant $1-c\|\omega\|_\infty$. Since $$\frac{\varphi_\lambda'' }{\varphi_\lambda'}=\frac{h''}{h'}+\frac{\lambda\omega'}{1+\lambda \omega},$$ the order of $\varphi_\lambda$ named $\beta_\lambda$ satisfies that $\beta_\lambda=\min\{2,\beta+\|\omega^\ast\|\},$ where $\|\omega^\ast\|$ given by equation (\ref{omega*}). Thus, we can assert the following corollary.

\begin{cor} Let $f$ as previous theorem. Then, for any $a$ and $b$ in $\D$ $$\frac{1}{2\beta_\lambda}(\exp(2\beta_\lambda d(a,b))-1)\sqrt{R(a)R(b)}\geq |f(a)-f(b)|\geq \frac{1}{2\beta_\lambda}(1-\exp(-2\beta_\lambda d(a,b)))\sqrt{R(a)R(b)}.$$

\end{cor}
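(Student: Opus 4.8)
The plan is to reduce the harmonic two-point problem to the analytic two-point distortion already available for the shears $\varphi_\lambda=h+\lambda g$, and then to translate the order $\beta_\lambda$ of $\varphi_\lambda$ into the quantities $R$ and $Q$. Fix $a,b\in\D$ and abbreviate $H=h(a)-h(b)$ and $G=g(a)-g(b)$, so that $f(a)-f(b)=H+\overline G$. If $G\neq0$, set $\lambda=\overline G/G$, which has modulus one; then $H+\overline G=H+\lambda G=\varphi_\lambda(a)-\varphi_\lambda(b)$, whence $|f(a)-f(b)|=|\varphi_\lambda(a)-\varphi_\lambda(b)|$ exactly. If $G=0$ the same identity holds with $\lambda=0$. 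Thus for each pair $a,b$ the harmonic increment is realized as the increment of a single analytic map.

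Next I would invoke the facts established immediately before the statement: for $|\lambda|=1$ the map $\varphi_\lambda$ is locally univalent and linearly connected, hence lies in a linearly invariant family of finite order $\beta_\lambda=\min\{2,\beta+\|\omega^\ast\|\}$. Applying the growth estimate (\ref{crecimiento orden}) to the normalized compositions $(\varphi_\lambda)_a$ and $(\varphi_\lambda)_b$, substituting $z=(b-a)/(1-\overline a b)$ so that $|z|=\rho(a,b)$, and multiplying the two one-sided bounds exactly as in the earlier proofs, yields the symmetric analytic two-point distortion
\[
\frac{1}{2\beta_\lambda}\bigl(\exp(2\beta_\lambda d(a,b))-1\bigr)\sqrt{R_{\varphi_\lambda}(a)R_{\varphi_\lambda}(b)}\ \geq\ |\varphi_\lambda(a)-\varphi_\lambda(b)|\ \geq\ \frac{1}{2\beta_\lambda}\bigl(1-\exp(-2\beta_\lambda d(a,b))\bigr)\sqrt{R_{\varphi_\lambda}(a)R_{\varphi_\lambda}(b)},
\]
where $R_{\varphi_\lambda}(z)=(1-|z|^2)|h'(z)+\lambda g'(z)|$. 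Because $f$ is sense-preserving ($|h'|>|g'|$) and $|\lambda|=1$, one has the pointwise sandwich
\[
R(z)=(1-|z|^2)(|h'(z)|-|g'(z)|)\ \leq\ (1-|z|^2)|h'(z)+\lambda g'(z)|=R_{\varphi_\lambda}(z)\ \leq\ (1-|z|^2)(|h'(z)|+|g'(z)|)=Q(z).
\]
The left inequality $R\leq R_{\varphi_\lambda}$, together with positivity of the coefficient $\frac{1}{2\beta_\lambda}(1-\exp(-2\beta_\lambda d(a,b)))$, turns the lower estimate for $|\varphi_\lambda(a)-\varphi_\lambda(b)|$ into $|f(a)-f(b)|\geq\frac{1}{2\beta_\lambda}(1-\exp(-2\beta_\lambda d(a,b)))\sqrt{R(a)R(b)}$, which is exactly the left-hand bound of the statement. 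This disposes of the lower estimate cleanly.

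The main obstacle is the upper estimate. The same chain gives $|f(a)-f(b)|\leq\frac{1}{2\beta_\lambda}(\exp(2\beta_\lambda d(a,b))-1)\sqrt{R_{\varphi_\lambda}(a)R_{\varphi_\lambda}(b)}$, and the only monotone substitution compatible with the inequality is $R_{\varphi_\lambda}\leq Q$, which produces $\sqrt{Q(a)Q(b)}$ on the right rather than $\sqrt{R(a)R(b)}$. To obtain the upper bound literally with $\sqrt{R(a)R(b)}$ one would need $R_{\varphi_\lambda}\leq R$, i.e.\ $|h'+\lambda g'|\leq|h'|-|g'|$, which is impossible whenever $g'\neq0$; indeed an upper bound controlled by $R$ cannot hold in general, since $R$ may vanish at points where $f(a)-f(b)$ does not. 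I therefore expect that the correct right-hand quantity is $Q$, and that the proof closes by substituting $R_{\varphi_\lambda}\leq Q$ to give $|f(a)-f(b)|\leq\frac{1}{2\beta_\lambda}(\exp(2\beta_\lambda d(a,b))-1)\sqrt{Q(a)Q(b)}$; the hard point is precisely reconciling this natural estimate with the $R$ printed on the upper side, which I read as a typographical slip for $Q$.
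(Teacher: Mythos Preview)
Your argument is exactly the one the paper intends: the paragraph preceding the corollary sets up precisely the ingredients you use---the identity $f(a)-f(b)=\varphi_\lambda(a)-\varphi_\lambda(b)$ for a suitable unimodular $\lambda$, the order $\beta_\lambda$ of $\varphi_\lambda$, and then Pommerenke's growth bounds (\ref{crecimiento orden}) together with the sandwich $|h'|-|g'|\le|\varphi_\lambda'|\le|h'|+|g'|$. The paper gives no separate proof of the corollary beyond that paragraph, so your reconstruction is the proof.

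Your diagnosis of the upper bound is also correct: the printed $\sqrt{R(a)R(b)}$ on the left side is a slip for $\sqrt{Q(a)Q(b)}$. This is consistent with every other two-sided estimate in the paper (Theorems~2 and~4), where the upper bound carries $Q$ and the lower bound carries $R$; and, as you note, an upper bound in terms of $R$ is impossible in general since $R(z)$ can vanish (as $|\omega(z)|\to1$) while $|f(a)-f(b)|$ does not.
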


\end{document}